\begin{document}
\title[Positive solutions of resonant singular equations]
{Pairs of positive solutions for resonant singular equations with the $p$-Laplacian}

\author[N. S. Papageorgiou, V. D. R\u{a}dulescu, D. D. Repov\v{s}]
{Nikolaos S. Papageorgiou, Vicen\c{t}iu D. R\u{a}dulescu, Du\v{s}an D. Repov\v{s}}

\address{Nikolaos S. Papageorgiou \newline
National Technical University,
Department of Mathematics,
Zografou Campus, Athens 15780, Greece}
\email{npapg@math.ntua.gr}

\address{Vicen\c{t}iu D. R\u{a}dulescu (corresponding author) \newline
Department of Mathematics, Faculty of Sciences,
King Abdulaziz University, P.O. Box 80203,
Jeddah 21589, Saudi Arabia. \newline
 Department of Mathematics,
University of Craiova,
Craiova 200585, Romania}
\email{radulescu@inf.ucv.ro}

\address{Du\v{s}an D. Repov\v{s} \newline
Faculty of Education and Faculty of Mathematics and Physics,
University of Ljubljana, Ljubljana 1000, Slovenia}
\email{dusan.repovs@guest.arnes.si}

\subjclass[2010]{35J20, 35J25, 35J67}
\keywords{Singular reaction; resonance; regularity; positive solutions;
\hfill\break\indent maximum principle; mountain pass theorem}

\begin{abstract}
 We consider a nonlinear elliptic equation driven by the Dirichlet
 $p$-Laplacian with a singular term and a $(p-1)$-linear perturbation
 which is resonant at $+\infty$ with respect to the principal eigenvalue.
 Using variational tools, together with suitable truncation and comparison
 techniques, we show the existence of at least two positive smooth solutions.
\end{abstract}

\maketitle
\numberwithin{equation}{section}
\newtheorem{theorem}{Theorem}[section]
\newtheorem{proposition}[theorem]{Proposition}
\newtheorem{remark}[theorem]{Remark}
\newtheorem{example}[theorem]{Example}
\allowdisplaybreaks

\section{Introduction}

Let $\Omega\subseteq{\mathbb R}^N$ be a bounded domain with a $C^2$-boundary
$\partial\Omega$. In this paper, we study the following nonlinear elliptic
problem with singular reaction
\begin{equation}\label{eq1}
 \begin{gathered}
  -\Delta_pu(z)=u(z)^{-\mu}+f(z,u(z))\quad \text{in } \Omega,\\
  u|_{\partial\Omega}=0,\quad u>0,\quad 1<p<\infty,\; 0<\mu<1.
 \end{gathered}
\end{equation}
In this problem, $\Delta_p$ denotes the $p$-Laplacian differential operator
defined by
$$
\Delta_pu={\rm div}\,(|Du|^{p-2}Du)\quad \text{for all }
 u\in W^{1,p}(\Omega),\ 1<p<\infty.
$$
In the reaction term, $u^{-\mu}$ (with $0<\mu<1$) is the singular part and
$f:\Omega\times{\mathbb R}\to{\mathbb R}$ is a Carath\'eodory
perturbation (that is, for all $x\in{\mathbb R}$ the mapping $z\mapsto f(z,x)$
is measurable and for almost all $z\in\Omega$ the map $x\mapsto f(z,x)$
is continuous) which exhibits $(p-1)$-linear growth near $+\infty$.

Using variational tools, together with suitable truncation and comparison
techniques, we prove a multiplicity theorem establishing the existence of
two positive smooth solutions. Such multiplicity theorems for singular
problems were proved by Hirano, Saccon and Shioji \cite{7}, Papageorgiou
and R\u adulescu \cite{12}, Sun, Wu and Long \cite{16}
(semilinear problems driven by the Laplacian) and Giacomoni and
Saudi \cite{4}, Giacomoni, Schindler and Takac \cite{5}, Kyritsi and Papageorgoiu
 \cite{6}, Papageorgiou and Smyrlis \cite{13, 14}, Perera and Zhang \cite{15}
(nonlinear problems). In all these papers the reaction term is parametric.
The presence of the parameter permits a more precise control of the nonlinearity
as the positive parameter $\lambda$ becomes small.

A complete overview of the theory of singular elliptic equations can be
found in the book by Ghergu and R\u adulescu \cite{3}.

\section{Mathematical background and hypotheses}

Let $X$ be a Banach space and $X^*$ its topological dual.
 By $\langle \cdot,\cdot\rangle$ we denote the duality brackets for the
pair $(X^*,X)$. Given $\varphi\in C^1(X,{\mathbb R})$, we say that
 $\varphi$ satisfies the ``Cerami condition" (the ``C-condition" for short),
if the following property holds:
\begin{quote}
Every sequence $\{u_n\}_{n\geq 1}\subseteq X$ such that
$\{\varphi(u_n)\}_{n\in {\mathbb N}}\subseteq{\mathbb R}$ is bounded
and $(1+\|u_n\|)\varphi'(u_n)\to 0$ in $X^*$ as $n\to\infty$,
admits a strongly convergent subsequence.
\end{quote}

This is a compactness-type condition on the functional $\varphi$.
It leads to a deformation theorem from which we can deduce the minimax theory
of the critical values of $\varphi$. One of the main results of this theory
is the so-called ``mountain pass theorem'', which we recall here.

\begin{theorem}\label{thm1}
Assume that $\varphi\in C^1(X,{\mathbb R})$ satisfies the C-condition,
$0<\rho<\|u_0-u_1\|$,
 $$
\max\{\varphi(u_0),\varphi(u_1)\}< \inf\{\varphi(u):\|u-u_0\|=\rho\}=m_{\rho}
$$
 and
$$
c=\inf_{\gamma\in\Gamma}\max_{0\leq t\leq 1}\varphi(\gamma(t)),
$$
where
$$
\Gamma=\{\gamma\in C([0,1],X):\gamma(0)=u_0,\gamma(1)=u_1\}.
$$
Then $c\geq m_{\rho}$ and $c$ is a critical value of $\varphi$
(that is, there exists $u_0\in X$ such that $\varphi(u_0)=c$ and
$\varphi'(u_0)=0$).
\end{theorem}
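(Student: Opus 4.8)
The plan is to split the argument into two parts: first establishing the inequality $c\geq m_\rho$ by a topological crossing argument, and then showing that $c$ is a critical value by a deformation argument carried out by contradiction.

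For the inequality, I would fix an arbitrary $\gamma\in\Gamma$ and consider the continuous real-valued function $t\mapsto\|\gamma(t)-u_0\|$ on $[0,1]$. Since $\gamma(0)=u_0$ gives the value $0<\rho$, while $\gamma(1)=u_1$ gives $\|u_1-u_0\|>\rho$, the intermediate value theorem produces some $t_*\in(0,1)$ with $\|\gamma(t_*)-u_0\|=\rho$. By the hypothesis defining $m_\rho$ we then obtain $\varphi(\gamma(t_*))\geq m_\rho$, hence $\max_{0\leq t\leq 1}\varphi(\gamma(t))\geq m_\rho$. As $\gamma\in\Gamma$ was arbitrary, taking the infimum yields $c\geq m_\rho>\max\{\varphi(u_0),\varphi(u_1)\}$.

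For the second part, I would argue by contradiction, assuming that $c$ is not a critical value, i.e. the critical set $K_c=\{u\in X:\varphi'(u)=0,\ \varphi(u)=c\}$ is empty. The key tool is the deformation theorem that follows from the C-condition: since there are no critical points at the level $c$, and using the strict inequality $c>\max\{\varphi(u_0),\varphi(u_1)\}$ just obtained, one can select $\varepsilon\in\left(0,\frac{c-\max\{\varphi(u_0),\varphi(u_1)\}}{2}\right)$ together with a continuous deformation $h:[0,1]\times X\to X$ satisfying $h(0,\cdot)=\mathrm{id}_X$, $h(1,\varphi^{c+\varepsilon})\subseteq\varphi^{c-\varepsilon}$ (where $\varphi^a=\{u:\varphi(u)\leq a\}$), and $h(t,u)=u$ whenever $\varphi(u)\notin(c-\varepsilon,c+\varepsilon)$. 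In particular, by the choice of $\varepsilon$, the points $u_0$ and $u_1$ are left fixed throughout the whole deformation.

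To close the contradiction, I would use the definition of $c$ as an infimum to pick a path $\gamma\in\Gamma$ with $\max_{0\leq t\leq 1}\varphi(\gamma(t))\leq c+\varepsilon$, and then form the deformed path $\hat\gamma=h(1,\gamma(\cdot))$. Since the endpoints are fixed, $\hat\gamma\in\Gamma$, while the deformation pushes the sublevel down so that $\max_{0\leq t\leq 1}\varphi(\hat\gamma(t))\leq c-\varepsilon<c$, contradicting $c=\inf_{\gamma\in\Gamma}\max_{0\leq t\leq 1}\varphi(\gamma(t))$. I expect the main obstacle to be the deformation theorem itself: its proof requires constructing a locally Lipschitz pseudo-gradient vector field on the set of regular points and flowing along its normalized negative direction, where the Cerami condition (rather than the plain Palais--Smale condition) is precisely what guarantees that the flow can drive the sublevel set $\varphi^{c+\varepsilon}$ below $c-\varepsilon$ in finite deformation time, controlling the behavior of $\varphi'$ along unbounded sequences through the weight $(1+\|u\|)$.
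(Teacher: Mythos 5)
The paper does not prove this theorem at all: it is recalled as the classical mountain pass theorem, a known consequence of the deformation theory available under the C-condition, which is precisely the route you take. Your two-step argument — the intermediate value theorem applied to $t\mapsto\|\gamma(t)-u_0\|$ to get $c\geq m_\rho$, then the contradiction via a deformation $h$ fixing $u_0,u_1$ (valid since the Cerami condition suffices for the deformation theorem, as shown by Bartolo--Benci--Fortunato) — is the standard proof and is correct.
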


In the analysis of problem \eqref{eq1} we will use the Sobolev space
$W^{1,p}_0(\Omega)$ and the Banach space
$C^1_0(\overline{\Omega})=\{u\in C^1(\overline{\Omega}):u|_{\partial\Omega}=0\}$.
In what follows, we denote by $\|\cdot\|$ the norm of the Sobolev space
$W^{1,p}_0(\Omega)$. On account of the Poincar\'e inequality, we have
$$
\|u\|=\|Du\|_p\quad \text{for all } u\in W^{1,p}_0(\Omega).
$$

The Banach space $C^1_0(\overline{\Omega})$ is an ordered Banach space
with positive (order) cone given by
$$
C_+(\overline{\Omega})=C_+=\{u\in C^1_0(\overline{\Omega}):u(z)
\geq 0\ \text{for all}\ z\in\Omega\}.
$$
This cone has a nonempty interior
$$
\operatorname{int}C_+=\big\{u\in C_+:u(z)>0 \text{ for all }
 z\in\Omega,\frac{\partial u}{\partial n}\big|_{\partial\Omega}<0\big\}.
$$
Here, $\frac{\partial u}{\partial n}=(Du,n)_{{\mathbb R}^N}$ with $n(\cdot)$
being the outward unit normal on $\partial\Omega$.

Let $A:W^{1,p}_0(\Omega)\to W^{-1,p'}(\Omega)=W^{1,p}_0(\Omega)^*$
(with $\frac{1}{p}+\frac{1}{p'}=1$) be the nonlinear map defined by
$$
\langle A(u),h\rangle=\int_{\Omega}|Du|^{p-2}(Du,Dh)_{{\mathbb R}^N}dz\quad
 \text{for all } u,h\in W^{1,p}_0(\Omega).
$$
This map has the following properties (see, for example, Motreanu,
Motreanu and Papageorgiou \cite[p. 40]{11}).

\begin{proposition}\label{prop2}
 The map $A:W^{1,p}_0(\Omega)\to W^{-1,p'}(\Omega)$ is bounded
(that is, maps bounded sets to bounded sets), continuous, strictly monotone
(hence maximal monotone, too) and of type $(S)_+$, that is,
\[
u_n\stackrel{w}{\to}u \text{ in } W^{1,p}_0(\Omega) \text{ and }
 \limsup_{n\to\infty}\langle A(u_n),u_n-u\rangle
\leq 0\Rightarrow u_n\to u \text{ in } W^{1,p}_0(\Omega).
\]
\end{proposition}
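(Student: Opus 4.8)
The plan is to verify the four asserted properties in turn, since they are largely independent and each reduces to a standard integral estimate combined with a pointwise algebraic property of the vector field $\xi\mapsto|\xi|^{p-2}\xi$ on ${\mathbb R}^N$.

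For boundedness and continuity I would start from Hölder's inequality with the conjugate exponents $p',p$. Since $(p-1)p'=p$, we have $\||Du|^{p-1}\|_{p'}=\|u\|^{p-1}$, and so $|\langle A(u),h\rangle|\leq\||Du|^{p-1}\|_{p'}\|Dh\|_p=\|u\|^{p-1}\|h\|$, whence $\|A(u)\|_*\leq\|u\|^{p-1}$ and $A$ is bounded. For continuity, if $u_n\to u$ in $W^{1,p}_0(\Omega)$ then $Du_n\to Du$ in $L^p(\Omega,{\mathbb R}^N)$; because $\xi\mapsto|\xi|^{p-2}\xi$ has $(p-1)$-growth, the associated Nemytskii operator is continuous from $L^p(\Omega,{\mathbb R}^N)$ into $L^{p'}(\Omega,{\mathbb R}^N)$, so $|Du_n|^{p-2}Du_n\to|Du|^{p-2}Du$ in $L^{p'}$. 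One more Hölder estimate, followed by taking the supremum over $\|h\|\leq1$, gives $\|A(u_n)-A(u)\|_*\to0$.

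For strict monotonicity the key is the elementary inequality $(|\xi|^{p-2}\xi-|\eta|^{p-2}\eta,\xi-\eta)_{{\mathbb R}^N}\geq0$, with equality if and only if $\xi=\eta$, which expresses the convexity of $\xi\mapsto\frac1p|\xi|^p$. Integrating yields $\langle A(u)-A(v),u-v\rangle\geq0$, and equality forces $Du=Dv$ almost everywhere, hence $u=v$ by the Poincaré inequality. Strict monotonicity together with the fact that $A$ is everywhere defined and continuous (hence hemicontinuous) gives maximal monotonicity by the standard criterion for monotone hemicontinuous operators on a reflexive Banach space.

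The main work is the $(S)_+$ property, and I would like to avoid the case distinction $p\geq2$ versus $1<p<2$ that the pointwise Simon inequalities would require, instead exploiting the uniform convexity of $W^{1,p}_0(\Omega)$. Assume $u_n\stackrel{w}{\to}u$ and $\limsup_n\langle A(u_n),u_n-u\rangle\leq0$. Writing $\langle A(u_n),u_n-u\rangle=\|u_n\|^p-\int_\Omega|Du_n|^{p-2}(Du_n,Du)_{{\mathbb R}^N}\,dz$ and bounding the last integral by $\|u_n\|^{p-1}\|u\|$ via Hölder, I get $\langle A(u_n),u_n-u\rangle\geq\|u_n\|^{p-1}(\|u_n\|-\|u\|)$. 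Since the left-hand side has nonpositive $\limsup$, the same holds for the right-hand side; reading this along a subsequence realizing $M:=\limsup_n\|u_n\|$ forces $M^{p-1}(M-\|u\|)\leq0$, so $\limsup_n\|u_n\|\leq\|u\|$. Combined with $\|u\|\leq\liminf_n\|u_n\|$ from weak lower semicontinuity of the norm, this gives $\|u_n\|\to\|u\|$. As $W^{1,p}_0(\Omega)$ is uniformly convex, the Kadec–Klee (Radon–Riesz) property upgrades weak convergence plus norm convergence to strong convergence, so $u_n\to u$ in $W^{1,p}_0(\Omega)$. The one point to handle with care is the degenerate case $\|u\|=0$, where one simply notes that $\|u_n\|^p\to0$ directly, together with the correct reading of the $\limsup$ inequality along norm-realizing subsequences.
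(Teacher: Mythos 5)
Your proof is correct, but there is nothing in the paper to compare it against: the paper does not prove Proposition \ref{prop2} at all, it simply refers the reader to Motreanu, Motreanu and Papageorgiou \cite{11} (p.\ 40). Judged on its own merits, your argument is complete and all the standard ingredients are invoked correctly: H\"older with $(p-1)p'=p$ for boundedness, Krasnoselskii continuity of the Nemytskii operator $\xi\mapsto|\xi|^{p-2}\xi$ from $L^p(\Omega,{\mathbb R}^N)$ to $L^{p'}(\Omega,{\mathbb R}^N)$ for continuity, strict convexity of $\xi\mapsto\frac{1}{p}|\xi|^p$ plus Poincar\'e for strict monotonicity, and the monotone-plus-hemicontinuous criterion on a reflexive space for maximality. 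Your treatment of the $(S)_+$ property is the nicest part: rather than the pointwise Simon-type inequalities (which force the case split $p\geq 2$ versus $1<p<2$ and an argument about a.e.\ convergence of gradients), you use the one-line estimate $\langle A(u_n),u_n-u\rangle\geq\|u_n\|^{p-1}(\|u_n\|-\|u\|)$ to get $\limsup_n\|u_n\|\leq\|u\|$, combine with weak lower semicontinuity to get $\|u_n\|\to\|u\|$, and finish with the Kadec--Klee property of the uniformly convex space $W^{1,p}_0(\Omega)$ (uniformly convex because $u\mapsto Du$ embeds it isometrically as a closed subspace of $L^p(\Omega,{\mathbb R}^N)$). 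The subsequence argument realizing the limsup and the degenerate case $\|u\|=0$ are both handled properly, so there is no gap.
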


We will also need some facts about the spectrum of the Dirichlet
$p$-Laplacian. So, we consider the following nonlinear eigenvalue problem
$$
-\Delta_pu(z)=\hat{\lambda}m(z)|u(z)|^{p-2}u(z) \text{ in } \Omega,\quad
 u|_{\partial\Omega}=0.
$$

Here, $m\in L^{\infty}(\Omega),m\geq 0,m\neq 0$. We say that $\hat{\lambda}$
is an ``eigenvalue", if the above problem admits a nontrivial solution $\hat{u}$
known as an ``eigenfunction'' corresponding to the eigenvalue $\hat{\lambda}$.
 The nonlinear regularity theory (see, for example, Gasinski and
Papageorgiou \cite[pp. 737-738]{2}), implies that
$\hat{u}\in C^1_0(\overline{\Omega})$. There exists a smallest eigenvalue
$\hat{\lambda}_1(m)$ such that:
\begin{itemize}
 \item $\hat{\lambda}_1(m)>0$ and is isolated in the spectrum
$\hat{\sigma}(p)$ of ($-\Delta_p,W^{1,p}_0(\Omega),m$) (that is, there exists
 $\epsilon>0$ such that
 $(\hat{\lambda}_1(m),\hat{\lambda}_1(m)+\epsilon)\cap\hat{\sigma}(p)=\emptyset$);

 \item $\hat{\lambda}_1(m)>0$ is simple in the sense that if $\hat{u},\hat{v}$
 are two eigenfunctions corresponding to $\hat{\lambda}_1(m)>0$, then
$\hat{u}=\xi\hat{v}$ for some $\xi\in{\mathbb R}\backslash\{0\}$;

\item
 \begin{equation}\label{eq2}
   \hat{\lambda}_1(m)=\inf\Big[\frac{\|Du\|^p_p}{\int_{\Omega}m(z)|u|^pdz}:
u\in W^{1,p}_0(\Omega),u\neq 0\Big].
 \end{equation}
\end{itemize}

The infimum in \eqref{eq2} is realized on the one-dimensional eigenspace
corresponding to $\hat{\lambda}_1(m)$. From the above properties it follows
that the elements of this eigenspace have constant sign.
We denote by $\hat{u}_1(m)$ the $L^p$-normalized (that is, $\|\hat{u}_1(m)\|_p=1$)
positive eigenfunction for the eigenvalue $\hat{\lambda}_1(m)$.
As we have already mentioned, $\hat{u}_1(m)\in C_+$. In fact, the nonlinear
 maximum principle (see, for example, Gasinski and Papageorgiou \cite[p. 738]{2})
implies that $\hat{u}_1(m)\in \operatorname{int}C_+$. If $m\equiv 1$, then we write
$$
\hat{\lambda}_1(1)=\hat{\lambda}_1>0\quad \text{and}\quad
 \hat{u}_1(1)=\hat{u}_1\in \operatorname{int}C_+.
$$

The map $m\mapsto\hat{\lambda}_1(m)$ exhibits the following strict monotonicity
 property.

\begin{proposition}\label{prop3}
 If $m_1,m_2\in L^{\infty}(\Omega),\ 0\leq m_1(z)\leq m_2(z)$ for almost all
$z\in\Omega$ and $m_1\neq 0,m_2\neq m_1$, then
$\hat{\lambda}_1(m_2)<\hat{\lambda}_1(m_1)$.
\end{proposition}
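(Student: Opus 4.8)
The plan is to work directly from the variational characterization \eqref{eq2} of the principal eigenvalue, using the positive eigenfunction of the smaller weight $m_1$ as a test function for the larger weight $m_2$. First I would establish the non-strict inequality $\hat{\lambda}_1(m_2)\leq\hat{\lambda}_1(m_1)$. Let $\hat{u}_1(m_1)\in\operatorname{int}C_+$ be the $L^p$-normalized positive eigenfunction for $m_1$. Since $0\leq m_1(z)\leq m_2(z)$ for almost all $z\in\Omega$ and $m_1\neq 0$, we have $\int_{\Omega}m_2(z)|\hat{u}_1(m_1)|^pdz\geq\int_{\Omega}m_1(z)|\hat{u}_1(m_1)|^pdz>0$, so inserting $\hat{u}_1(m_1)$ into the Rayleigh quotient for $m_2$ yields
$$
\hat{\lambda}_1(m_2)\leq\frac{\|D\hat{u}_1(m_1)\|^p_p}{\int_{\Omega}m_2(z)|\hat{u}_1(m_1)|^pdz}\leq\frac{\|D\hat{u}_1(m_1)\|^p_p}{\int_{\Omega}m_1(z)|\hat{u}_1(m_1)|^pdz}=\hat{\lambda}_1(m_1).
$$

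To upgrade this to a strict inequality, I would argue by contradiction, assuming $\hat{\lambda}_1(m_2)=\hat{\lambda}_1(m_1)$. Then every inequality in the chain above must be an equality; in particular the second one forces
$$
\int_{\Omega}(m_2(z)-m_1(z))\,|\hat{u}_1(m_1)|^p\,dz=0.
$$

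The decisive point, and the step I expect to carry the whole argument, is the pointwise positivity of the principal eigenfunction. Because $\hat{u}_1(m_1)\in\operatorname{int}C_+$, we have $\hat{u}_1(m_1)(z)>0$ for all $z\in\Omega$, hence $|\hat{u}_1(m_1)|^p>0$ throughout $\Omega$. Combined with $m_2-m_1\geq 0$ almost everywhere, the vanishing of the integral above forces $m_2(z)=m_1(z)$ for almost all $z\in\Omega$, contradicting the hypothesis $m_2\neq m_1$. Therefore $\hat{\lambda}_1(m_2)<\hat{\lambda}_1(m_1)$. No genuine analytic obstacle arises beyond invoking the interior-cone regularity of the principal eigenfunction already recorded in the preamble (the nonlinear maximum principle giving $\hat{u}_1(m)\in\operatorname{int}C_+$); the only subtlety is recognizing that strictness does not follow automatically from monotonicity of the Rayleigh quotient but must be extracted from the equality case, where the strict positivity of $\hat{u}_1(m_1)$ does the essential work.
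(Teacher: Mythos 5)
Your proof is correct, but note that the paper itself contains no proof of Proposition \ref{prop3}: it is stated as a known strict monotonicity property of the weighted principal eigenvalue, with the reader referred to \cite{2, 11}, so there is no in-paper argument to compare against. Your argument is exactly the standard one behind that citation: insert the principal eigenfunction $\hat{u}_1(m_1)\in\operatorname{int}C_+$ into the Rayleigh quotient \eqref{eq2} for $m_2$, use that the infimum for $m_1$ is attained at $\hat{u}_1(m_1)$, and exploit its strict positivity in $\Omega$. All the individual steps check out (in particular, $\int_{\Omega}m_1|\hat{u}_1(m_1)|^p\,dz>0$ follows from testing the eigenvalue equation with $\hat{u}_1(m_1)$, so the quotients are well defined). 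The only simplification worth pointing out is that your contradiction detour is unnecessary: since $m_2-m_1\geq 0$ a.e., $m_2\neq m_1$, and $\hat{u}_1(m_1)(z)>0$ for all $z\in\Omega$, one has
$$
\int_{\Omega}(m_2-m_1)\,\hat{u}_1(m_1)^p\,dz>0
$$
directly, so the middle inequality in your chain is already strict and gives $\hat{\lambda}_1(m_2)<\hat{\lambda}_1(m_1)$ in one line; the equality-case analysis you perform is just the contrapositive of this observation.
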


We mention that every eigenfunction $\hat{u}$ corresponding to an eigenvalue
$\hat{\lambda}\neq\hat{\lambda}_1(m)$, is necessarily nodal
(that is, sign changing). For details on the spectrum of
$(-\Delta_p, W^{1,p}_0(\Omega),m)$ we refer to \cite{2, 11}.

For $x\in{\mathbb R}$ we define $x^{\pm}=\max\{\pm x,0\}$.
Then, given $u\in W^{1,p}_0(\Omega)$, we set $u^{\pm}(\cdot)=u(\cdot)^{\pm}$.
We have
$$
u^{\pm}\in W^{1,p}_0(\Omega),\ u=u^+-u^-,\ |u|=u^++u^-.
$$
Given a measurable function $g:\Omega\times{\mathbb R}\to{\mathbb R}$
(for example, a Carath\'eodory function), we denote by $N_g(\cdot)$
the Nemitsky (superposition) operator corresponding to $g$, that is,
 $$
N_g(u)(\cdot)=g(\cdot,u(\cdot))\ \text{for all}\ u\in W^{1,p}_0(\Omega).
$$
We know that $z\mapsto N_g(u)(z)=g(z,u(z))$ is measurable.

The hypotheses on the perturbation term $f(z,x)$ are the following:
\smallskip

\noindent(H1):\ $f:\Omega\times{\mathbb R}\to{\mathbb R}$ is a Carath\'eodory
function such that $f(z,0)=0$ for almost all $z\in\Omega$ and
\begin{itemize}
 \item[(i)] for every $\rho>0$, there exists $a_{\rho}\in L^{\infty}(\Omega)$
such that
 $$
|f(z,x)|\leq a_{\rho}(z)\quad \text{for almost all } z\in\Omega, \text{ all }
 0\leq x\leq\rho
$$
 and there exists $w\in C^1(\overline{\Omega})$ such that
 $$
w(z)\geq\hat{c}>0 \text{ for all } z\in\overline{\Omega} \text{ and }
 -\Delta_pw\geq 0 \text{ in } W^{1,p}_0(\Omega)^*=W^{-1,p'}(\Omega)
$$
 and for every compact $K\subseteq \Omega$ we can find $c_K>0$ such that
 $$
w(z)^{-\mu}+f(z,w(z))\leq-c_K<0 \text{ for almost all } z\in K;
$$

 \item[(ii)] if $F(z,x)=\int^x_0f(z,s)ds$, then there exists
$\eta\in L^{\infty}(\Omega)$ such that
 \begin{gather*}
\hat{\lambda}_1\leq\liminf_{x\to+\infty}\frac{f(z,x)}{x^{p-1}}
\leq\limsup_{x\to+\infty}\frac{f(z,x)}{x^{p-1}}\leq \eta(z)
\text{ uniformly for almost all } z\in\Omega,\\
f(z,x)x-pF(z,x)\to-\infty\text{ as } x\to+\infty \text{ uniformly for almost all }
 z\in\Omega;
 \end{gather*}

 \item[(iii)] there exists $\delta\in(0,\hat{c})$ such that for all compact
$K\subseteq\Omega$ we have
 $$
f(z,x)\geq\hat{c}_K>0\ \text{ for almost all } z\in K, \text{ all } 0<x\leq \delta;
$$

 \item[(iv)] for every $\rho>0$, there exists $\hat{\xi}_{\rho}>0$ such that
for almost all $z\in\Omega$ the mapping
 $$
x\mapsto f(z,x)+\hat{\xi}_{\rho}x^{p-1}
$$
 is nondecreasing on $[0,\rho]$.
\end{itemize}

\begin{remark} \rm
 Since we are looking for positive solutions and all the above hypotheses concern
the positive semiaxis ${\mathbb R}_+=\left[0,+\infty\right)$, we may assume
without any loss of generality that
 \begin{equation}\label{eq3}
  f(z,x)=0 \text{ for almost all } z\in\Omega,\text{ all } x\leq 0.
 \end{equation}
\end{remark}

Hypothesis (H1)(ii) permits resonance with respect to the principal eigenvalue
$\hat{\lambda}_1>0$. The second convergence condition in (H1)(ii)
 implies that the resonance at $+\infty$ with respect to $\hat{\lambda}_1>0$,
 is from the right of the principle eigenvalue in the sense that
$$
\hat{\lambda}_1x^{p-1}-pF(z,x)\to-\infty\ \text{as}\ x\to+\infty
\text{ uniformly for almost all } z\in\Omega
$$
(see the proof of Proposition \ref{prop5}). This makes the problem noncoercive
and so the direct method of the calculus of variations is not applicable.

Hypothesis (H1)(iv) is satisfied if for example $f(z,\cdot)$ is
differentiable and the derivative $f'_x(z,\cdot )$ satisfies for some $\rho>0$
$$
f'_x(z,x)\geq -\tilde{c}_\rho x^{p-2} \text{ for almost all $x\in\Omega$, for all
$0\leq x\leq\rho$ and some $\tilde{c}_\rho>0$.}
$$

\begin{example} \rm
The following function satisfies hypotheses (H1). For the sake of simplicity
we drop the $z$-dependence:
$$
f(x)=\begin{cases}
 x^{p-1}-2x^{r-1}&\text{if } 0\leq x\leq 1\\
 \eta x^{p-1}+x^{\tau-1}-(2+\eta)x^{q-1}&\text{if } 1<x,
\end{cases}
$$
with $\eta\geq\hat{\lambda}_1$ and $1<\tau,\ q<p<r<\infty$.
\end{example}

\section{Pair of positive solutions}

In this section we prove the existence of two positive smooth solutions
for problem \eqref{eq1}.
We start by considering the  auxiliary singular Dirichlet problem
\begin{equation}\label{eq4}
 -\Delta_pu(z)=u(z)^{-\mu} \text{ in } \Omega,\ u|_{\partial\Omega}=0,\ u>0.
\end{equation}
By Papageorgiou and Smyrlis \cite[Proposition 5 ]{14}, we know that problem
\eqref{eq4} has a unique positive solution $\tilde{u}\in \operatorname{int}C_+$.

Let $\delta>0$ be as postulated by hypothesis (H1)(iii) and let
$$
0<t\leq \min\big\{1,\frac{\delta}{\|u\|_{\infty}}\big\}.
$$
We set $\underline{u}=t\tilde{u}$. Then $\underline{u}\in \operatorname{int}C_+$
and we have
\begin{equation} \label{eq5}
\begin{aligned}
 -\Delta_p\underline{u}(z)=t^{p-1}[-\Delta_p\tilde{u}(z)]
&= t^{p-1}\tilde{u}(z)^{-\mu} \\
&\leq \underline{u}(z)^{-\mu}\quad (\text{since } 0<t\leq 1) \\
&\leq \underline{u}(z)^{-\mu}+f(z,\underline{u}(z)) \text{ for almost all }
 z\in\Omega
\end{aligned}
\end{equation}
(see \cite{14}, note that $\underline{u}(z)\in\left(0,\delta\right]$ for all
$z\in\overline{\Omega}$ and see hypothesis (H1)(iii)).
Also note that
$\underline{u}\leq w$.

We introduce the following truncation of the reaction term in \eqref{eq1}:
\begin{equation}\label{eq6}
 \hat{f}(z,x)=\begin{cases}
  \underline{u}(z)^{-\mu}+f(z,\underline{u}(z))&\text{if } x<\underline{u}(z)\\
  x^{-\mu}+f(z,x)&\text{if } \underline{u}(z)\leq x\leq w(z)\\
  w(z)^{-\mu}+f(z,w(z))&\text{if } w(z)<x.
 \end{cases}
\end{equation}
This is a Carath\'eodory function. We set $\hat{F}(z,x)=\int^x_0\hat{f}(z,s)ds$
and consider the functional $\hat{\varphi}:W^{1,p}_0(\Omega)\to{\mathbb R}$
 defined by
$$
\hat{\varphi}(u)=\frac{1}{p}\|Du\|^p_p-\int_{\Omega}\hat{F}(z,u)dz\quad
 \text{for all } u\in W^{1,p}_0(\Omega).
$$

By Papageorgiou and Smyrlis \cite[Proposition 3]{14} we have
$\hat{\varphi}\in C^1(W^{1,p}_0({\mathbb R}))$.

In what follows, we denote by $[\underline{u},w]$ the order interval
$$
[\underline{u},w]=\{u\in W^{1,p}_0(\Omega):\underline{u}(z)
\leq u(z)\leq w(z) \text{ for almost all } z\in\Omega\}.
$$
Also, we denote by $\operatorname{int}_{C^1_0(\overline{\Omega})}
[\underline{u},w]$ the interior in the $C^1_0(\overline{\Omega})$-norm
topology of $[\underline{u},w]\cap C^1_0(\overline{\Omega})$.

In the next proposition we produce a positive smooth solution located
in the above order interval.

\begin{proposition}\label{prop4}
 If hypotheses {\rm (H1)} hold, then problem \eqref{eq1} has a positive
solution $u_0\in \operatorname{int}_{C^1_0(\overline{\Omega})}[\underline{u},w]$.
\end{proposition}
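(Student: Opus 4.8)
The plan is to obtain $u_0$ as the global minimizer of the truncated functional $\hat{\varphi}$ and then to trap it inside the order interval $[\underline{u},w]$, so that it actually solves \eqref{eq1}. First I would check that $\hat{\varphi}$ is coercive. Because of the truncation \eqref{eq6}, the positive part of $x\mapsto\hat{f}(z,x)$ is controlled by $\underline{u}(z)^{-\mu}+a_{\rho}(z)$ with $\rho=\|w\|_{\infty}$, and since $0<\mu<1$ the term $\underline{u}^{-\mu}$ pairs with every $W^{1,p}_0(\Omega)$-function through Hardy's inequality (here $\underline{u}\in\operatorname{int}C_+$ behaves like the distance to $\partial\Omega$, and $0<\mu<1$ is essential). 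Hence $\hat{F}(z,u)$ grows at most linearly in $u$ and $\hat{\varphi}(u)\geq\frac{1}{p}\|u\|^p-c_1\|u\|-c_2\to+\infty$. Together with the sequential weak lower semicontinuity of $\hat{\varphi}$ (convexity of $u\mapsto\frac{1}{p}\|Du\|_p^p$, the compact embedding $W^{1,p}_0(\Omega)\hookrightarrow L^p(\Omega)$, and dominated convergence for the integral term), the Weierstrass--Tonelli theorem yields a global minimizer $u_0$, so that $\hat{\varphi}'(u_0)=0$, i.e. $A(u_0)=N_{\hat{f}}(u_0)$.

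Next I would localize $u_0$ in $[\underline{u},w]$. Testing $\hat{\varphi}'(u_0)=0$ with $(\underline{u}-u_0)^+\in W^{1,p}_0(\Omega)$ and using that $\hat{f}(z,u_0)=\underline{u}^{-\mu}+f(z,\underline{u})$ on $\{u_0<\underline{u}\}$ together with the subsolution inequality \eqref{eq5}, I get $\langle A(u_0)-A(\underline{u}),(\underline{u}-u_0)^+\rangle\geq0$; but the left-hand side is $\leq0$ by the monotonicity of $\xi\mapsto|\xi|^{p-2}\xi$, so it vanishes and the strict monotonicity in Proposition \ref{prop2} forces $|\{u_0<\underline{u}\}|=0$, that is $\underline{u}\leq u_0$. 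Symmetrically, testing with $(u_0-w)^+$ --- which belongs to $W^{1,p}_0(\Omega)$ and is supported in a compact subset of $\Omega$, since $w\geq\hat{c}>0$ on $\partial\Omega$ while $u_0|_{\partial\Omega}=0$ --- and using $\hat{f}(z,u_0)=w^{-\mu}+f(z,w)\leq0$ on $\{u_0>w\}$ (by (H1)(i)) together with $-\Delta_pw\geq0$, I obtain $\langle A(u_0)-A(w),(u_0-w)^+\rangle\leq0$, which combined with the reverse inequality from monotonicity forces it to vanish, so $u_0\leq w$. Thus $u_0\in[\underline{u},w]$, on which $\hat{f}(z,u_0)=u_0^{-\mu}+f(z,u_0)$; therefore $u_0$ solves \eqref{eq1}, and $u_0\geq\underline{u}\in\operatorname{int}C_+$ makes it a positive solution (the singular term is integrable against test functions since $u_0^{-\mu}\leq\underline{u}^{-\mu}$ and $0<\mu<1$).

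Finally I would upgrade $u_0\in[\underline{u},w]$ to $u_0\in\operatorname{int}_{C^1_0(\overline{\Omega})}[\underline{u},w]$. Nonlinear regularity theory gives $u_0\in C^1_0(\overline{\Omega})$, since $u_0$ is squeezed between the smooth functions $\underline{u}$ and $w$ and the singular term is bounded by $\underline{u}^{-\mu}$. For the interior property I would set $\rho=\|w\|_{\infty}$, take $\hat{\xi}_{\rho}$ from (H1)(iv), and compare $u_0$ against $\underline{u}$ and against $w$ after adding $\hat{\xi}_{\rho}(\cdot)^{p-1}$ to both sides so that the perturbation becomes monotone. Choosing $t<1$ in the definition of $\underline{u}$ renders the inequality in \eqref{eq5} strict ($-\Delta_p\underline{u}=t^{p-1}\tilde{u}^{-\mu}<\underline{u}^{-\mu}$), while (H1)(i) and (H1)(iii) make $w$ a strict supersolution; feeding these strict inequalities into the strong comparison principle for the $p$-Laplacian yields $u_0-\underline{u}\in\operatorname{int}C_+$ and $u_0(z)<w(z)$ for all $z\in\Omega$, whence $u_0\in\operatorname{int}_{C^1_0(\overline{\Omega})}[\underline{u},w]$. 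The hard part is precisely this last step: the singular reaction $x\mapsto x^{-\mu}$ is decreasing and therefore works against the monotonicity supplied by (H1)(iv), so one cannot simply read off an ordered pair of right-hand sides. It is the strictness coming from $t<1$ and from (H1)(i),(iii), rather than monotonicity of the full reaction, that must drive the strong comparison, and controlling it near $\partial\Omega$, where the singular term blows up, is the delicate point.
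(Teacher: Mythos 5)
Your first two steps reproduce the paper's argument: coercivity of $\hat{\varphi}$ from the integrability of $\underline{u}^{-\mu}$ (the paper gets $\underline{u}^{-\mu}\in L^{p'}(\Omega)$ via $\underline{u}^{-\mu}\le c_0^{\mu}\hat{u}_1^{-\mu/p'}$ and the Lazer--McKenna lemma; your Hardy-inequality variant is a harmless substitute), then Weierstrass--Tonelli, then trapping $u_0$ in $[\underline{u},w]$ by testing with $(\underline{u}-u_0)^+$ and $(u_0-w)^+$. Up to the regularity statement $u_0\in \operatorname{int}C_+$ your proposal is sound and is essentially the paper's proof.

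The genuine gap is the final step, which you yourself flag as ``the delicate point'' but never carry out: ``feeding these strict inequalities into the strong comparison principle for the $p$-Laplacian'' is not an available tool here, and for one of the two comparisons no such principle can even be formulated. The paper uses two different mechanisms. For $u_0-\underline{u}\in\operatorname{int}C_+$ it compares the quantities $-\Delta_p u-u^{-\mu}+\hat{\xi}_{\rho}u^{p-1}$: since $x\mapsto -x^{-\mu}+\hat{\xi}_{\rho}x^{p-1}$ is increasing on $(0,\infty)$, the singular term, once placed on the operator side, has the \emph{favorable} monotonicity (so your diagnosis that it ``works against'' the comparison is not the true obstruction); the strict gap comes from (H1)(iii) (i.e.\ $f(z,\underline{u})\ge\hat{c}_K>0$ because $0<\underline{u}\le\delta$), not from $t<1$, and the conclusion is then delivered by the strong comparison principle for \emph{singular} equations of Papageorgiou--Smyrlis \cite[Proposition 4]{14} --- exactly the result your sketch presupposes rather than proves. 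For the upper bound, a cone-interior strong comparison makes no sense because $w\notin C^1_0(\overline{\Omega})$ (indeed $w\ge\hat{c}>0$ on $\partial\Omega$); what one needs is only $u_0<w$ in $\Omega$, and the paper obtains it by a localization idea absent from your proposal: since $w\ge\hat{c}>0$ on $\partial\Omega$ while $u_0\in C^1_0(\overline{\Omega})$, the contact set $D_0=\{z\in\Omega:u_0(z)=w(z)\}$ is compact in $\Omega$; enclosing it in an open set $\mathcal{U}$ with $\overline{\mathcal{U}}\subseteq\Omega$, using the strict bound $w^{-\mu}+f(z,w)\le -c_{\overline{\mathcal{U}}}<0$ from (H1)(i) and the uniform continuity estimates \eqref{eq12}--\eqref{eq13}, one shows that the small translate $u_0+\tilde{\delta}$ still satisfies \eqref{eq15}, and then only Tolksdorf's \emph{weak} comparison principle \cite[Lemma 3.1]{17} on $\mathcal{U}$ gives $u_0+\tilde{\delta}\le w$ on $\overline{\mathcal{U}}$, forcing $D_0=\emptyset$. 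Note finally that your concern about controlling this second comparison near $\partial\Omega$ is misplaced: there $w-u_0$ is bounded below by $\hat{c}$ minus a small quantity, so the whole difficulty is interior, and it is resolved by compactness of $D_0$ plus weak comparison, not by any strong comparison theorem.
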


\begin{proof}
 We know that $\underline{u}\in \operatorname{int}C_+$. So, using
Marano and Papageorgiou \cite[Proposition 2.1]{10} we can find $c_0>0$ such that
\[
\hat{u}^{1/p'}_{1}\leq c_0\underline{u}\quad
 \Rightarrow\quad \underline{u}^{-\mu}\leq c_0^{\mu}\hat{u}_1^{-\mu/p'}.
\]
Hence using the lemma of Lazer and McKenna \cite{9}, we have that
 $$
\underline{u}^{-\mu}\in L^{p'}(\Omega).
$$

 Therefore by \eqref{eq5} we see that $\hat{\varphi}(\cdot)$ is coercive.
Also, using the Sobolev embedding theorem, we see that $\hat{\varphi}$
is sequentially weakly lower semicontinuous. So, by the Weierstrass-Tonelli
theorem, we can find $u_0\in W^{1,p}_0(\Omega)$ such that
 \begin{equation} \label{eq7}
\begin{aligned}
  &\hat{\varphi}(u_0)=\inf[\hat{\varphi}(u):u\in W^{1,p}_0(\Omega)], \\
  &\Rightarrow \hat{\varphi}'(u_0)=0, \\
  &\Rightarrow \langle A(u_0),h\rangle=\int_{\Omega}\hat{f}(z,u_0)hdz
 \text{ for all } h\in W^{1,p}_0(\Omega).
 \end{aligned}
\end{equation}
In \eqref{eq7} we first choose $h=(\underline{u}-u_0)^+\in W^{1,p}_0(\Omega)$.
Then
 \begin{align*}
  \langle A(u_0),(\underline{u}-u_0)^+\rangle
&=\int_{\Omega}[\underline{u}^{-\mu}+f(z,\underline{u})](\underline{u}-u_0)^+dz
\quad \text{(see \eqref{eq6})} \\
&\geq \langle A(\underline{u}),(\underline{u}-u_0)^+\rangle\quad
 (\text{see \eqref{eq5}})
\end{align*}
which implies
\[
 \langle A(\underline{u})-A(u_0),(\underline{u}-u_0)^+\rangle\leq 0,
\]
and this implies $\underline{u}\leq u_0$.

Next, in \eqref{eq7} we choose $h=(u_0-w)^+\in W^{1,p}_0(\Omega)$
(see hypothesis (H1)(i)). Then
  \begin{align*}
  \langle A(u_0),(u_0-w)^+\rangle
&=\int_{\Omega}[w^{-\mu}+f(z,w)](u_0-w)^+dz\\
&\leq\langle A(w),(u_0-w)^+\rangle\quad (\text{see hypothesis (H1)(i)}),
\end{align*}
which implies
\[
\langle A(u_0)-A(w),(u_0-w)^+\rangle\leq 0,
\]
and this implies $u_0\leq w$.
So, we have proved that
 \begin{equation}\label{eq8}
  u_0\in[\underline{u},w]=\{u\in W^{1,p}_0(\Omega):\underline{u}(z)
\leq u_0(z)\leq w(z)\quad \text{ for almost all } z\in\Omega\}.
 \end{equation}

 Clearly, $u_0\neq \underline{u}$ (see hypothesis (H1)(iii)) and $u_0\neq w$
(see hypothesis (H1)(i)).
From \eqref{eq6}, \eqref{eq7}, \eqref{eq8}, we have
\[
\langle A(u_0),h\rangle=\int_{\Omega}[u_0^{-\mu}+f(z,u_0)]hdz,\quad
 0\leq u_0^{-\mu}\leq \underline{u}^{-\mu}\in L^p(\Omega)
\]
which implies
 \begin{equation}\label{eq9}
 -\Delta_pu_0(z)=u_0(z)^{-\mu}+f(z,u_0(z)) \quad\text{for a.a. }
 z\in\Omega,\; u_0|_{\partial\Omega}=0,
\end{equation}
see \cite{14}.

 Also, by  Gilbarg and Trudinger \cite[ Lemma 14.16 p. 355]{6} we know
that there exists small $\delta_0>0$ such that, if
$\Omega_{\delta_0}=\{z\in\Omega:d(z,\partial\Omega)<\delta_0\}$, then
 $$
d\in \operatorname{int}C_+(\overline{\Omega}_{\delta_0}),
$$
 where $d(\cdot)=d(\cdot,\partial\Omega)$.
Let $D^*=\overline{\Omega}\backslash \Omega_{\delta_0}$.
Setting $C(D^*)_+=\{h\in C(D^*):h(z)\geq 0 \text{ for all } z\in D^*\}$,
 we have $d\in \operatorname{int}C(D^*)_+\subseteq \operatorname{int}C_+(D^*)$.
Then as before, via Marano and Papageorgiou \cite[Proposition 2.1 ]{10}
we  find $0<c_1<c_2$ such that
 \begin{equation}\label{eq10}
  c_1d\leq \underline{u}\leq c_2d.
 \end{equation}
Then by \eqref{eq9}, \eqref{eq10}, hypotheses (H1)(i), (H1)(iv) and
 Giacomoni and Saudi \cite[Theorem B.1]{4}, we have
 $$
u_0\in \operatorname{int}C_+.
$$

 Now let $\rho=\|w\|_{\infty}$ and let $\hat{\xi}_{\rho}>0$
be as postulated by hypothesis (H1)(iv). We have
 \begin{align*}
  &-\Delta_p u_0(z)-u_0(z)^{-\mu}+\hat{\xi}_{\rho}u_0(z)^{p-1}\\
  &= f(z,u_0(z))+\hat{\xi}_{\rho}u_0(z)^{p-1}\quad (\text{see \eqref{eq9}})\\
  &\geq f(z,\underline{u}(z))+\hat{\xi}_{\rho}\underline{u}(z)^{p-1}\quad (\text{see \eqref{eq8} and hypothesis}\ (H1)(iv))\\
  &> \hat{\xi}_{\rho}\underline{u}(z)^{p-1}\quad (\text{see hypothesis(H1)(ii)})\\
  &\geq -\Delta_p\underline{u}(z)-\underline{u}(z)^{-\mu}+\hat{\xi}_{\rho}
 \underline{u}(z)^{p-1}\quad (\text{see \eqref{eq5}})\ \text{for almost all }
 z\in\Omega.
 \end{align*}

 Hence, invoking Proposition \ref{prop4} of Papageorgiou and Smyrlis \cite{14},
we have
 $$
u_0-\underline{u}\in \operatorname{int}C_+.
$$
From the hypothesis on the function $w(\cdot)$ (see (H1)(i)), we see that
 $$
D_0=\{z\in\Omega:u_0(z)=w(z)\}\ \text{is compact in}\ \Omega.
$$
Then we can find an open set $\mathcal{U}\subseteq\Omega$ with Lipschitz boundary,
 such that
 $$
D_0\subseteq\mathcal{U}\subseteq\overline{\mathcal{U}}\subseteq\Omega
 \text{ and } d(z,D_0)\leq \delta_1 \text{ for all }
 z\in\overline{\mathcal{U}}, \text{ with } \delta_1>0.
$$
 Let $\epsilon>0$ be such that
 \begin{equation}\label{eq11}
  u_0(z)+\epsilon\leq w(z) \text{ for all } z\in\partial\mathcal{U}
 \end{equation}
 (such an $\epsilon>0$ exists since $\partial\Omega$ is compact and
$w-u_0\in C(\overline{\Omega})$).

 Exploiting the uniform continuity of the map $x\mapsto x^{p-1}$ on $[0,\rho]$
we can find $\delta_2>0$ such that
 \begin{equation}\label{eq12}
  \hat{\xi}_{\rho}|x^{p-1}-v^{p-1}|\leq\epsilon\quad \text{for all }
 x,v\in[\min_{\overline{\mathcal{U}}}u_0,\max_{\overline{\mathcal{U}}}w],\;
 |x-v|\leq\delta_2.
 \end{equation}

 Similarly, the uniform continuity of $x\mapsto x^{-\mu}$ on any compact subset
of $(0,+\infty)$, implies that we can find $\delta_3\in\left(0,\delta_2\right]$
such that
 \begin{equation}\label{eq13}
  |x^{-\mu}-v^{-\mu}|\leq\epsilon\quad \text{for all }
 x,v\in\big[\frac{\hat{c}}{2},\|w\|_{\infty}\big],\; |x-v|\leq\delta_2.
 \end{equation}
Then choosing $\delta_1\in(0,\delta_3)$ small enough and
$\tilde{\delta}\in(0,\delta_1)$ we have
 \begin{equation} \label{eq14}
\begin{aligned}
  &-\Delta_p(u_0+\tilde{\delta})(z)+\hat{\xi}_{\rho}(u_0+\tilde{\delta})(z)^{p-1} \\
  &\leq -\Delta_pu_0(z)+\tilde{\xi}_{\rho}u_0(z)^{p-1}+\epsilon
 \quad (\text{see \eqref{eq12}}) \\
  &= u_0(z)^{-\mu}+f(z,u_0(z))+\hat{\xi}_{\rho}u_0(z)^{p-1}+\epsilon
\quad (\text{see \eqref{eq9}}) \\
  &\leq w(z)^{-\mu}+f(z,w(z))+\hat{\xi}_{\rho}w(z)^{p-1}+2\epsilon
\quad (\text{see \eqref{eq13}, \eqref{eq8},  (H1)(iv)}) \\
  &\leq -c_{\overline{\mathcal{U}}}+2\epsilon+\hat{\xi}_{\rho}w(z)^{p-1}
 \text{ for almost all } z\in\Omega \quad (\text{see  (H1)(i)}).
 \end{aligned}
\end{equation}

 Choosing $\epsilon\in\left(0,c_{\overline{\mathcal{U}}}/2\right)$ and using
 once more hypothesis (H1)(i), we deduce from \eqref{eq14} that
 \begin{equation}\label{eq15}
-\Delta_p(u_0+\tilde{\delta})+\hat{\xi}_{\rho}(u_0+\tilde{\delta})^{p-1}
\leq-\Delta_pw+\hat{\xi}_{\rho}w^{p-1}\ \text{in }
 W^{1,p}_0(\Omega)^*=W^{-1,p'}(\Omega).
 \end{equation}
From \eqref{eq15}, \eqref{eq11} and the weak comparison principle of
Tolksdorf \cite[Lemma 3.1]{17}, we have
 $$
(u_0+\tilde{\delta})(z)\leq w(z) \text{ for all } z\in\overline{\mathcal{U}}.
$$
But $D_0\subseteq\overline{\mathcal{U}}$. Therefore $D_0=\emptyset$ and so
 $$
0<(w-u_0)(z)\text{ for all } z\in\overline{\Omega}.
$$
 We conclude that
 $$
u_0\in \operatorname{int}_{C^1_0(\overline{\Omega})}[\underline{u},w].
$$
The proof is now complete.
\end{proof}

Next we produce a second positive smooth solution for problem \eqref{eq1}.

\begin{proposition}\label{prop5}
 If hypotheses {\rm (H1)} hold, then  \eqref{eq1} has a second positive solution
$\hat{u}\in \operatorname{int} C_+$.
\end{proposition}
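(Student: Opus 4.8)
The plan is to obtain the second solution via the mountain pass theorem (Theorem \ref{thm1}), using the first solution $u_0$ from Proposition \ref{prop4} as one end of the valley. The singularity prevents us from working directly with the natural energy of \eqref{eq1}, so I would first regularize it by truncating the reaction from below at $u_0$. Concretely, set
\[
\tilde f(z,x)=\begin{cases} u_0(z)^{-\mu}+f(z,u_0(z)) & \text{if } x\le u_0(z),\\ x^{-\mu}+f(z,x) & \text{if } u_0(z)<x,\end{cases}
\]
which is Carath\'eodory, let $\tilde F(z,x)=\int_0^x\tilde f(z,s)\,ds$, and consider $\tilde\varphi(u)=\frac1p\|Du\|_p^p-\int_\Omega\tilde F(z,u)\,dz$. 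Since $u_0\in\operatorname{int}C_+$ forces $u_0^{-\mu}\in L^{p'}(\Omega)$ (as in the proof of Proposition \ref{prop4}) and $f$ has $(p-1)$-linear growth by (H1)(ii), the functional $\tilde\varphi$ is $C^1$ (cf. \cite{14}). Testing $\tilde\varphi'(u)=0$ with $(u_0-u)^+$ and using that $u_0$ satisfies \eqref{eq9} together with the strict monotonicity of $A$ (Proposition \ref{prop2}), every critical point $u$ of $\tilde\varphi$ satisfies $u\ge u_0$; on $\{u\ge u_0\}$ we have $\tilde f(z,u)=u^{-\mu}+f(z,u)$, so such $u$ solves \eqref{eq1} and, by the regularity and maximum-principle arguments of Proposition \ref{prop4}, $u\in\operatorname{int}C_+$.

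Next I would show that $u_0$ is a local minimizer of $\tilde\varphi$. For this, introduce the further truncation $\check f$ that caps $\tilde f$ from above at $w$, together with its primitive functional $\check\varphi$; by \eqref{eq5} and (H1)(i) this $\check\varphi$ is coercive and sequentially weakly lower semicontinuous, hence attains a global minimizer $\check u$, which the usual comparison (testing with $(u_0-\check u)^+$ and $(\check u-w)^+$) places in $[u_0,w]$ and shows to solve \eqref{eq1}. If $\check u\ne u_0$ we are already done, with $\hat u=\check u\in\operatorname{int}C_+$; so I may assume $\check u=u_0$. Since $u_0\in\operatorname{int}_{C^1_0(\overline{\Omega})}[\underline u,w]$, a small $C^1_0(\overline{\Omega})$-ball around $u_0$ lies in $[\underline u,w]$, and on it $\check\varphi$ and $\tilde\varphi$ coincide (for arguments below $w$ the two truncations agree); hence $u_0$ is a local $C^1_0(\overline{\Omega})$-minimizer of $\tilde\varphi$. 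Invoking the singular analogue of the Brezis--Nirenberg result (see \cite{5}), $u_0$ is then a local minimizer of $\tilde\varphi$ in $W^{1,p}_0(\Omega)$.

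Finally I would verify the mountain pass geometry and the compactness. Along the ray $t\hat u_1$, using $\|D\hat u_1\|_p^p=\hat\lambda_1$ and the ``resonance from the right'' encoded in (H1)(ii) (as explained in the Remark following the hypotheses), the leading $p$-homogeneous terms cancel and $\tilde\varphi(t\hat u_1)\to-\infty$ as $t\to+\infty$; thus $\tilde\varphi$ is unbounded below and I can pick $u_1=t\hat u_1$ with $t$ large and $\tilde\varphi(u_1)$ below the level of $u_0$. The decisive technical point is the Cerami condition: for a Cerami sequence $\{u_n\}$ I would combine the bound on $p\tilde\varphi(u_n)-\langle\tilde\varphi'(u_n),u_n\rangle$ with the condition $f(z,x)x-pF(z,x)\to-\infty$ to rule out $\|u_n\|\to\infty$ (the normalized sequence would converge to a positive multiple of $\hat u_1$, forcing $u_n\to+\infty$ a.e.\ and driving that quantity to $-\infty$, a contradiction), and then use the $(S)_+$-property of $A$ from Proposition \ref{prop2} to extract a strongly convergent subsequence. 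With the local minimizer $u_0$ (which we may assume isolated, as otherwise the nearby critical points already furnish second solutions), the point $u_1$, and the C-condition in hand, Theorem \ref{thm1} yields a critical point $\hat u$ of $\tilde\varphi$ at the mountain pass level $c\ge m_\rho>\tilde\varphi(u_0)$; hence $\hat u\ne u_0$, and by the first paragraph $\hat u\in\operatorname{int}C_+$ is a second positive solution of \eqref{eq1}. I expect the Cerami condition under resonance, together with the promotion of the $C^1_0(\overline{\Omega})$-local minimizer to a $W^{1,p}_0(\Omega)$-local minimizer in the singular setting, to be the main obstacles.
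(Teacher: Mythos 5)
Your proposal is correct and follows essentially the same route as the paper's proof: truncate the reaction from below to remove the singularity, verify the Cerami condition via the resonance analysis (normalized Cerami sequence converging to a positive multiple of $\hat{u}_1$, then Fatou's lemma with $f(z,x)x-pF(z,x)\to-\infty$ giving the contradiction), promote $u_0$ from a local $C^1_0(\overline{\Omega})$-minimizer to a local $W^{1,p}_0(\Omega)$-minimizer of the truncated functional, use $\varphi(t\hat{u}_1)\to-\infty$ along the principal eigenfunction, and conclude with the mountain pass theorem. The only difference is bookkeeping: you truncate at $u_0$, which forces the auxiliary capped functional $\check{\varphi}$ and the dichotomy $\check{u}\neq u_0$ versus $\check{u}=u_0$, whereas the paper truncates at $\underline{u}$ so that $u_0$, being the global minimizer of $\hat{\varphi}$ from Proposition \ref{prop4} and interior to $[\underline{u},w]$ in $C^1_0(\overline{\Omega})$, is immediately a local $C^1_0(\overline{\Omega})$-minimizer of $\varphi_0$.
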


\begin{proof}
 Consider the following truncation of the reaction term in \eqref{eq1}:
 \begin{equation} \label{eq16}
  g(z,x)=\begin{cases}
  \underline{u}(z)^{-\mu}+f(z,\underline{u}(z))&\text{if } u\leq\underline{u}(z)\\
  x^{-\mu}+f(z,x)&\text{if } \underline{u}(z)<x.
  \end{cases}
 \end{equation}

 This is a Carath\'eodory function. We set $G(z,x)=\int^x_0g(z,s)ds$ and
consider the functional $\varphi_0:W^{1,p}_0(\Omega)\to{\mathbb R}$ defined by
 $$
\varphi_0(u)=\frac{1}{p}\|Du\|^p_p-\int_{\Omega}G(z,u)dz\quad \text{for all }
 u\in W^{1,p}_0(\Omega).
$$
As before,  Papageorgiou and Smyrlis \cite[Proposition 3]{14} implies that
 $$
\varphi_0\in C^1(W^{1,p}_0(\Omega)).
$$
\smallskip

\noindent\textbf{Claim.}  $\varphi_0$ satisfies the C-condition.

We consider a sequence $\{u_n\}_{n\geq 1}\subseteq W^{1,p}_0(\Omega)$ such that
 \begin{gather}
|\varphi_0(u_n)|\leq M_1\quad \text{for some $M_1>0$ and for all }
 n\in {\mathbb N},\label{eq17}\\
 (1+\|u_n\|)\varphi'_0(u_n)\to 0\quad \text{in } W^{-1,p'}(\Omega) \text{ as }
 n\to\infty.\label{eq18}
 \end{gather}
From \eqref{eq17} we have
 \begin{equation} \label{eq19}
 \big|\langle A(u_n),h\rangle-\int_{\Omega}g(z,u_n)hdz\big|
\leq\frac{\epsilon_n\|h\|}{1+\|u_n\|}
\end{equation}
for all $h\in W^{1,p}_0(\Omega)$ with $\epsilon_n\to 0^+$.

In \eqref{eq19} we choose $h=-u^-_n\in W^{1,p}_0(\Omega)$. Then
\[
\|Du^-_n\|^p_p-\int_{\Omega}[\underline{u}^{-\mu}+f(z,\underline{u})](-u^-_n)dz
\leq\epsilon_n\quad \text{for all $n\in {\mathbb N}$, (see \eqref{eq16})}
\]
which implies
\begin{equation} \label{eq20}
\begin{aligned}
&\|u^-_n\|^p\leq c_3\|u^-_n\|\quad \text{for some $c_3>0$ and for all }
 n\in {\mathbb N}, \\
&\Rightarrow \{u^-_n\}_{n\geq 1}\subseteq W^{1,p}_0(\Omega) \text{ is bounded}.
 \end{aligned}
\end{equation}
 Suppose that $\{u^+_n\}_{n\geq 1}\subseteq W^{1,p}_0(\Omega)$ is unbounded.
 By passing to a subsequence if necessary, we may assume that
 \begin{equation}\label{eq21}
  \|u^+_n\|\to\infty
 \end{equation}
Let $y_n=\frac{u^+_n}{\|u^+_n\|}$, $n\in {\mathbb N}$.
Then $\|y_n\|=1$, $y_n\geq 0$ for all $n\in {\mathbb N}$. So, we may assume that
 \begin{equation}\label{eq22}
  y_n\stackrel{w}{\to}y\ \text{in}\ W^{1,p}_0(\Omega)\quad \text{and}\quad
 y_n\to y \text{ in } L^p(\Omega),\; y\geq 0.
 \end{equation}
From \eqref{eq19} and \eqref{eq20} we have
 \[
\big|\langle A(u^+_n),h\rangle-\int_{\Omega}g(z,u^+_n)hdz\big|
\leq c_4\|h\|\quad \text{for some $c_4>0$ and  all } n\in {\mathbb N}
\]
which implies
 \begin{equation} \label{eq23}
\big|\langle A(y_n),h\rangle-\int_{\Omega}\frac{N_g(u^+_n)}{\|u^+_n\|^{p-1}}hdz
\big|\leq\frac{c_4\|h\|}{\|u^+_n\|^{p-1}}\quad \text{for all } n\in {\mathbb N}.
\end{equation}

Hypotheses (H1)(i) and (H1)(i)(ii) imply that there exists $c_5>0$ such that
 $$
|f(z,x)|\leq c_5(1+x^{p-1})\quad \text{for almost all $z\in\Omega$ and all } x>0.
$$
From this growth estimate and \eqref{eq16}, it follows that
 $$
\Big\{\frac{N_g(u^+_n)}{\|u^+_n\|^{p-1}}\Big\}_{n\geq 1}
\subseteq L^{p'}(\Omega) \text{ is bounded}.
$$
So, by passing to a suitable sequence if necessary and using hypothesis
(H1)(ii) we have
 \begin{equation}\label{eq24}
\frac{N_g(u^+_n)}{\|u^+_n\|^{p-1}}\stackrel{w}{\to}\tilde{\eta}(z)y^{p-1}\quad
 \text{in $L^{p'}(\Omega)$ as } n\to\infty,
 \end{equation}
with $\hat{\lambda}_1\leq\tilde{\eta}(z)\leq\eta(z)$ for almost all $z\in\Omega$,
see Aizicovici, Papageorgiou and Staicu \cite[proof of Proposition 16)]{1}.

 Recall that $\underline{u}^{-\mu}\in L^{p'}(\Omega)$. Therefore
\[
\big|\int_{\Omega}\underline{u}^{-\mu}hdz\big|
\leq c_6\|h\|\quad \text{for some $c_6>0$ and all } h\in W^{1,p}_0(\Omega)
\]
which implies
\begin{equation}\label{eq25}
\frac{1}{\|u^+_n\|^{p-1}}\int_{\Omega}\underline{u}^{-\mu}hdz\to 0\quad
 \text{as $n\to\infty$,  (see \eqref{eq21})}.
 \end{equation}
If in \eqref{eq23} we choose $h=y_n-y\in W^{1,p}_0(\Omega)$ and pass to
the limit as $n\to\infty$, then using \eqref{eq22}, \eqref{eq24}, \eqref{eq25}
 we have
 $\lim_{n\to\infty}\langle A(y_n),y_n-y\rangle=0$ which implies
\begin{equation}\label{eq26}
y_n\to y \text{ in } W^{1,p}_0(\Omega),\quad \|y\|=1,\; y\geq 0\;
(\text{see Proposition \ref{prop2}}).
 \end{equation}
So, if in \eqref{eq23} we pass to the limit as $n\to\infty$ and use \eqref{eq24},
\eqref{eq25}, \eqref{eq26} to obtain
\[
\langle A(y),h\rangle=\int_{\Omega}\tilde{\eta}(z)y^{p-1}hdz\quad
 \text{for all } h\in W^{1,p}_0(\Omega)
\]
which implies
 \begin{equation}\label{eq27}
-\Delta_py(z)=\tilde{\eta}(z)y(z)^{p-1}\quad \text{for almost all }
z\in\Omega,\quad y|_{\partial \Omega}=0.
 \end{equation}
Recall that
 $$
\hat{\lambda}_1\leq\tilde{\eta}(z)\leq\eta(z)\quad \text{for almost all }
 z\in\Omega\ (\text{see \eqref{eq24}}).
$$
We first assume that $\hat{\lambda}_1\not\equiv\tilde{\eta}$.
Then using Proposition \ref{prop3} we have
 $$
\hat{\lambda}_1(\tilde{\eta})<\hat{\lambda}_1(\hat{\lambda_1})=1.
$$
Also, from \eqref{eq27} and since $\|y\|=1$ (hence $y\neq 0$, see \eqref{eq26}),
we infer that $y(\cdot)$ must be nodal, a contradiction to \eqref{eq22}.

 Next, we assume that $\tilde{\eta}(z)=\hat{\lambda}_1$ for almost all
$z\in\Omega$. It follows from \eqref{eq27} that
\[
y=\vartheta\hat{u}_1\quad \text{with $\vartheta>0$, see \eqref{eq26}}.
\]
Then $y\in \operatorname{int}C_+$ and so $y(z)>0$ for all $z\in\Omega$. Therefore
 \begin{equation}
u^+_n(z)\to+\infty\ \text{for all}\ z\in\Omega\quad \text{as }
 n\to\infty,\label{eq28}
\end{equation}
which implies
\[
f(z,u^+_n(z))u^+_n(z)-pF(z,u^+_n(z))\to-\infty
\]
for almost all $z\in\Omega$ as $n\to\infty$, see hypothesis (H1)(ii).
 This in turn implies
\begin{equation}
\int_{\Omega}[f(z,u^+_n)u^+_n-pF(z,u^+_n)]dz\to-\infty\quad (\text{by Fatou's lemma}).
\label{eq29}
\end{equation}
From \eqref{eq19} with $h=u^+_n\in W^{1,p}_0(\Omega)$, we have
 \begin{equation}\label{eq30}
  -\|Du^+_n\|^p_p+\int_{\Omega}g(z,u^+_n)u^+_ndz\geq-\epsilon_n\quad
 \text{for all } n\in {\mathbb N}.
 \end{equation}
On the other hand, from \eqref{eq17} and \eqref{eq20}, we have
 \begin{equation}\label{eq31}
  \|Du^+_n\|^p_p-\int_{\Omega}pG(z,u^+_n)dz\geq-M_2\quad
 \text{for some $M_2>0$ and all } n\in {\mathbb N}.
 \end{equation}
Adding \eqref{eq30} and \eqref{eq31}, we obtain
\[
\int_{\Omega}[g(z,u^+_n)u^+_n-pG(z,u^+_n)]dz\geq-M_3\quad
\text{for some $M_3>0$ and all } n\in {\mathbb N}
\]
which implies
 \begin{equation}\label{eq32}
\int_{\Omega}[f(z,u^+_n)u^+_n-pF(z,u^+_n)]dz\geq-M_4
 \end{equation}
for some $M_4>0$ and all $n\in{\mathbb N}$ (see \eqref{eq16} and \eqref{eq28}).

 Comparing \eqref{eq29} and \eqref{eq32}, we have a contradiction.
This proves that
 \begin{align*}
  &\{u^+_n\}_{n\geq 1}\subseteq W^{1,p}_0(\Omega) \text{ is bounded},\\
  &\Rightarrow \{u_n\}_{n\geq 1}\subseteq W^{1,p}_0(\Omega) \text{ is bounded
(see \eqref{eq20})}.
 \end{align*}
So, we assume that
 $$
u_n\stackrel{w}{\to}u \text{ in } W^{1,p}_0(\Omega)\quad \text{and}\quad
 u_n\to u \text{ in } L^p(\Omega).
$$
Then we obtain
 \begin{equation}\label{eq33}
  \int_{\Omega}g(z,u_n)(u_n-u)dz\to 0\quad \text{as } n\to\infty.
 \end{equation}
If in \eqref{eq19} we choose $h=u_n-u\in W^{1,p}_0(\Omega)$, then
 \begin{align*}
  &\lim_{n\to\infty}\langle A(u_n),u_n-u\rangle=0,\\
  &\Rightarrow u_n\to u\quad \text{in } W^{1,p}_0(\Omega)\
 (\text{see Proposition \ref{prop2}}).
 \end{align*}
This proves the claim.

 Note that
 \begin{equation}\label{eq34}
  \hat{\varphi}\big|_{[\underline{u},w]}
=\varphi_0\big|_{[\underline{u},w]}\quad (\text{see \eqref{eq6} and \eqref{eq16}}).
 \end{equation}
From the proof of Proposition \ref{prop4} we know that
$u_0\in \operatorname{int}_{C^1_0(\overline{\Omega})}[\underline{u},w]$
is a minimizer of $\hat{\varphi}$. Hence it follows from \eqref{eq34}
that $u_0$ is a local $C^1_0(\overline{\Omega})$-minimizer of $\varphi_0$.
Invoking  Giacomoni and Saudi \cite[Theorem 1.1]{4}, we can say that $u_0$
is a local $W^{1,p}_0(\Omega)$-minimizer of $\varphi_0$. Using \eqref{eq16}
we can easily see that
 \begin{align*}
  K_{\varphi_0}
&= \{u\in W^{1,p}_0(\Omega):\varphi'_0(u)=0\}
 \subseteq[\underline{u})\cap C_+\\
&= \{u\in C^1_0(\overline{\Omega}):\underline{u}(z)\leq u(z) \text{ for all }
 z\in\overline{\Omega}\}.
\end{align*}
So, we may assume that $K_{\varphi_0}$ is finite or otherwise we already
have an infinity of positive smooth solutions of \eqref{eq1}. Since $u_0$
is a local minimizer of $\varphi_0$ we can find $\rho\in(0,1)$ small such that
 \begin{equation}\label{eq35}
  \varphi_0(u_0)<\inf[\varphi_0(u):\|u-u_0\|=\rho]=m_{\rho}
 \end{equation}
 (see Aizicovici, Papageorgiou and Staicu \cite[proof of Proposition 29]{1}).

 Hypothesis (H1)(ii) implies that given any $\xi>0$, we can find $M_5=M_5(\xi)>0$
such that
 \begin{equation}\label{eq36}
  f(z,x)x-pF(z,x)\leq-\xi \text{ for almost all $z\in\Omega$ and all } x\geq M_5.
 \end{equation}
We have
\begin{align*}
\frac{d}{dx}\Big(\frac{F(z,x)}{x^p}\Big)\
&=\frac{f(z,x)x^{2p}-px^{p-1}F(z,x)}{x^{2p}} \\
&=\frac{f(z,x)x-pF(z,x)}{x^{p+1}} \\
&\leq-\frac{\xi}{x^{p+1}}
\end{align*}
for almost all $z\in\Omega$ and all
$x\geq M_5$, see \eqref{eq36}.
This implies
 \begin{equation}\label{eq37}
\frac{F(z,x)}{x^p}-\frac{F(z,y)}{y^p}
\leq\frac{\xi}{p}\big[\frac{1}{x^p}-\frac{1}{y^p}\big],
 \end{equation}
for almost all $z\in\Omega$, for all $x\geq y\geq M_5$.

 Hypothesis (H1)(iii) implies
 \begin{equation}\label{eq38}
\hat{\lambda}_1\leq\liminf_{x\to+\infty}\frac{pF(z,x)}{x^p}
\leq\limsup_{x\to+\infty}\frac{pF(z,x)}{x^p}\leq\eta(z)
 \end{equation}
uniformly for almost all $z\in\Omega$.

In \eqref{eq37} we pass to the limit as $x\to+\infty$ and use \eqref{eq38}.
 We obtain that
$\hat{\lambda}_1y^p-pF(z,y)\leq-\xi$ for almost all $z\in\Omega$ and all
$y\geq M_5$. This implies
 \begin{equation}\label{eq39}
\hat{\lambda}_1y^p-pF(z,y)\to-\infty\quad \text{as } y\to+\infty
\text{ uniformly for a.a .} z\in\Omega.
 \end{equation}
For $t>0$ big (so that $t\hat{u}_1\geq \underline{u}$, recall that
$\hat{u}_1\in \operatorname{int}C_+$), we have
\[
\varphi_0(t\hat{u}_1)\leq\frac{t^p}{p}\hat{\lambda}_1-\int_{\Omega}F(z,t\hat{u}_1)dz
+c_7\quad \text{for some $c_7>0$, see \eqref{eq16}}
\]
which implies
\[
p\varphi_0(t\hat{u}_1)\leq\int_{\Omega}[\hat{\lambda}_1(t\hat{u}_1)^p
-pF(z,t\hat{u}_1)]dz+pc_7,
\]
which in turn implies
 \begin{equation}\label{eq40}
p\varphi_0(t\hat{u}_1)\to-\infty\quad (\text{see \eqref{eq39} and use Fatou's lemma}).
 \end{equation}
Then \eqref{eq35}, \eqref{eq40} and the claim permit the use of Theorem \ref{thm1}
(the mountain pass theorem) and so we can find $\hat{u}\in W^{1,p}_0(\Omega)$
such that
 \begin{equation}\label{eq41}
  \hat{u}\in K_{\varphi_0}\quad \text{and}\quad m_{\rho}\leq\varphi_0(\hat{u}).
 \end{equation}
It follows from \eqref{eq35} and \eqref{eq41} that
$\hat{u}\neq u_0$, $\hat{u}\in[\underline{u})\cap C_+$ and so
$\hat{u}\in \operatorname{int}C_+$ is the second positive smooth solution
of problem \eqref{eq1}.
\end{proof}

So, we can state the following multiplicity theorem for problem \eqref{eq1}

\begin{theorem}\label{thm6}
 If hypotheses {\rm (H1)} hold, then problem \eqref{eq1} has at least two
 positive smooth solutions
$u_0$ and $\hat{u}$ in $\operatorname{int}C_+$.
\end{theorem}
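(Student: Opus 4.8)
The plan is to assemble the theorem directly from the two preceding propositions, since together they already produce the required pair of solutions. First I would invoke Proposition \ref{prop4}, which yields a first positive smooth solution $u_0$ lying in $\operatorname{int}_{C^1_0(\overline{\Omega})}[\underline{u},w]$. The proof of that proposition in fact establishes more than membership in the order interval: it shows $u_0-\underline{u}\in\operatorname{int}C_+$ and the strict boundary inequality $0<(w-u_0)(z)$ for all $z\in\overline{\Omega}$, together with the regularity conclusion $u_0\in\operatorname{int}C_+$. Hence $u_0$ is a genuine interior positive smooth solution of \eqref{eq1}, and I would simply record this as the first member of the pair.

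Next I would invoke Proposition \ref{prop5} to obtain a second positive solution $\hat{u}\in\operatorname{int}C_+$. The essential point to carry over is distinctness from $u_0$: since $u_0$ is a local $W^{1,p}_0(\Omega)$-minimizer, the separation \eqref{eq35} gives $\varphi_0(u_0)<m_\rho$, while the mountain pass critical value satisfies $m_\rho\leq\varphi_0(\hat{u})$; comparing energies forces $\hat{u}\neq u_0$. Because $K_{\varphi_0}\subseteq[\underline{u})\cap C_+$, the critical point $\hat{u}$ solves \eqref{eq1} and lies in $\operatorname{int}C_+$. Combining the two propositions then delivers the two distinct solutions $u_0,\hat{u}\in\operatorname{int}C_+$, which is exactly the assertion of the theorem.

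Since the theorem is a corollary of Propositions \ref{prop4} and \ref{prop5}, the real obstacles lie upstream rather than in the assembly. The hardest part of the whole argument is the verification of the Cerami condition for $\varphi_0$ under resonance in Proposition \ref{prop5}: one must rule out the limiting eigenvalue case $\tilde{\eta}\equiv\hat{\lambda}_1$, and this is precisely where the one-sided resonance hypothesis in (H1)(ii), namely $f(z,x)x-pF(z,x)\to-\infty$, becomes indispensable, as it contradicts the lower energy bound \eqref{eq32} coming from boundedness of the $\varphi_0$-values. A secondary delicate point upstream is the strict comparison $u_0<w$ in Proposition \ref{prop4}, obtained via the weak comparison principle of Tolksdorf applied on a neighborhood of the contact set $D_0$. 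For the theorem itself, the only thing to check is that nothing degenerates in the handoff, i.e.\ that both solutions are simultaneously available and distinct, which is immediate from the cited results.
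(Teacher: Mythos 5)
Your proposal is correct and matches the paper exactly: the paper offers no separate proof of Theorem \ref{thm6}, treating it as an immediate consequence of Propositions \ref{prop4} and \ref{prop5}, with the distinctness $\hat{u}\neq u_0$ already secured inside the proof of Proposition \ref{prop5} via the energy separation \eqref{eq35} and \eqref{eq41}. Your assembly, including the observation that all the real work (the Cerami condition under resonance and the strict comparison $u_0<w$) lies upstream in the two propositions, is exactly the paper's route.
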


\subsection*{Acknowledgments}
This research was supported by the Slovenian Research Agen\-cy grants P1-0292,
J1-8131, and J1-7025.
 V. D. R\u adulescu acknowledges the support through a grant of the Ministry
of Research and Innovation, CNCS--UEFISCDI, project number
PN-III-P4-ID-PCE-2016-0130, within PNCDI III.

\newpage

\end{document}